\theoremstyle{plain}
\newtheorem{theorem}{Theorem}
\newtheorem{lemma}[theorem]{Lemma}
\theoremstyle{definition}
\newtheorem{example}[theorem]{Example}
\theoremstyle{remark}
\begin{document}
\baselineskip=17pt

\title[Identities generated by compositions of integers]
{On identities generated by compositions of positive integers}

\author{Vladimir Shevelev}
\address{Department of Mathematics \\Ben-Gurion University of the
 Negev\\Beer-Sheva 84105, Israel. e-mail: shevelev@bgu.ac.il}
\subjclass{05A19. Key words and phrases: compositions of integer, combinatorial
 identities, polynomials, Stirling numbers of the first kind}
 \newpage
\begin{abstract}
We prove astonishing identities generated by compositions of positive integers.
In passing, we obtain two new identities for Stirling numbers of the first kind.
In the two last sections we clarify an algebraic sense of these identities and obtain
several other structural close identities.
\end{abstract}
\maketitle
\section{Introduction}
Recall (cf.\cite {2}) that a composition of a positive integer $n$ is a way of
writing $n$ as a sum of a sequence of positive integers. These integers are called
parts of a composition. Thus to a composition of $n$ with $r$ parts corresponds $r$-
fold vector $(k_1,..., k_r)$ of positive integer components with the condition
 $k_1+k_2+...+k_r=n.$ From the definition it follows that, in contrast
 to partitions of $n,$ the order of parts matters. Note that the set
 of all solutions of the Diophantine equation  $k_1+k_2+...+k_r=n,\; k_i\geq1$
 is the set of all compositions with $r$ parts.
We start with two examples.
\begin{example}\label{e1}
Let $k=3.$ We have the following compositions of $3:$ $1+1+1=1+2=2+1=3.$
\end{example}
Let us map a composition $ k=k_1+k_2+...+k_r$ to the
 following product of binomial coefficients:
  $\binom{n}{k_1}\binom{n}{k_2}\cdot...\cdot\binom{n}{k_r}$ and all compositions
  of $k$ we map to the sum of such products, where the summand are taken
   with the sign $(-1)^{k-r}.$ After summing the products with the same sets of
   factors, we obtain a liner combinations of such products.
  In our case $k=3,$ we have the following linear combination of products of binomial coefficients:
  \begin{equation}\label{1}
  c_3(n)=\binom{n}{1}^3-2\binom{n}{1}\binom{n}{2}+\binom{n}{3}.
 \end{equation}
It is easy to verify that
\begin{equation}\label{2}
  c_3(n)=\binom{n+2}{3}.
 \end{equation}
 \begin{example}\label{e2}
 We have the following compositions of $k=4:$ $1+1+1+1=2+1+1=1+2+1=1+1+2=1+3=3+1=
 2+2=4.$
 \end{example}
  Thus we have the following linear combination of products of binomial coefficients:
\begin{equation}\label{3}
  c_4(n)=\binom{n}{1}^4-3\binom{n}{1}^2\binom{n}{2}+2\binom{n}{1}\binom{n}{3}+\binom{n}{2}^2
 -\binom{n}{4}
 \end{equation}
 and it is easy to verify that
\begin{equation}\label{4}
  c_4(n)=\binom{n+3}{4}.
  \end{equation}
 In general, we obtain the following.
\begin{theorem}\label{t3}
\begin{equation}\label{5}
\sum_{r=1}^k(-1)^{k-r}\sum_{k_1+k_2+...+k_r=k,\;k_i\geq1}
\prod_{i=1}^{r}\binom{n}{k_i}=\binom{n+k-1}{k}.
\end{equation}
\end{theorem}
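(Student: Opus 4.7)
The plan is to prove Theorem \ref{t3} via a generating function computation, working in the formal power series ring $\mathbb{Q}[[x]]$.

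First I would observe that the inner sum over compositions is naturally a coefficient extraction: if we set
\[
F(x) = \sum_{j\geq 1}\binom{n}{j}x^{j} = (1+x)^{n}-1,
\]
then expanding $F(x)^{r}$ and collecting terms yields
\[
\sum_{k_{1}+\cdots+k_{r}=k,\;k_{i}\geq 1}\prod_{i=1}^{r}\binom{n}{k_{i}} = [x^{k}]\,F(x)^{r}.
\]
This is the key dictionary that converts the combinatorial data (compositions) into an analytic object.

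Next I would pull the sign $(-1)^{k-r}=(-1)^{k}(-1)^{r}$ outside and interchange the summation with the coefficient extraction, obtaining
\[
\mathrm{LHS} = (-1)^{k}\,[x^{k}]\sum_{r=1}^{k}(-1)^{r}F(x)^{r}.
\]
Because $F(x)^{r}$ has order of vanishing at least $r$ at the origin, the terms with $r>k$ contribute nothing to $[x^{k}]$, so I can extend the summation to $r\geq 1$ harmlessly. Then the geometric series evaluates as
\[
\sum_{r\geq 1}(-F(x))^{r} = \frac{-F(x)}{1+F(x)} = \frac{1-(1+x)^{n}}{(1+x)^{n}} = (1+x)^{-n}-1.
\]

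Finally I would extract the coefficient of $x^{k}$. For $k\geq 1$,
\[
\mathrm{LHS} = (-1)^{k}[x^{k}]\bigl((1+x)^{-n}-1\bigr) = (-1)^{k}\binom{-n}{k} = \binom{n+k-1}{k},
\]
using the standard negation identity $\binom{-n}{k}=(-1)^{k}\binom{n+k-1}{k}$. The only step requiring any care is the interchange of sums and the truncation of the geometric series, but both are justified by the observation that $F(x)=O(x)$, so everything stays in the ring of formal power series. No analytic convergence issue arises, making the proof a clean two-line manipulation once the generating function dictionary is in place.
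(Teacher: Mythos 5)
Your proof is correct, but it takes a genuinely different route from the paper's main argument. The paper first evaluates the inner sum by inclusion--exclusion to reduce (\ref{5}) to the binomial identity (\ref{13}), and then proves (\ref{13}) by induction on $k$ via two Stirling-number lemmas (with an alternative finite-difference proof of (\ref{13}) due to Gessel in Section 7). You instead encode the composition sum as $[x^k]F(x)^r$ with $F(x)=(1+x)^n-1$, sum the geometric series $\sum_{r\ge 1}(-F)^r=-F/(1+F)=(1+x)^{-n}-1$, and read off the coefficient; every step is legitimate in $\mathbb{Q}[[x]]$ because $F$ has no constant term, so the truncation of the $r$-sum at $k$ and the interchange with coefficient extraction are harmless, and the final negation identity $\binom{-n}{k}=(-1)^k\binom{n+k-1}{k}$ is standard. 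Your computation is essentially the generating-function form of the symmetric-function relation $H(x)E(-x)=1$ that underlies Lemma \ref{L7} in Section 9, specialized to $e_k=\binom{n}{k}$, $h_k=\binom{n+k-1}{k}$; in particular, replacing $F$ by other series immediately yields the dual Theorem \ref{t6} and the identities of Section 10. What the paper's longer route buys is the chain of auxiliary identities (\ref{13}), (\ref{18}), (\ref{19}) and (\ref{38}), which are of independent interest and would not emerge from your two-line manipulation; what your route buys is brevity and a uniform mechanism for the whole family of identities.
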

In cases $k=3$ and $k=4,$  formula (\ref{5}), evidently, leads to Examples
\ref{e1}-\ref{e2}.
It is interesting to note (using a simple induction) that the $k$-th polynomial in $n$ of the sequence
$\{\binom{n+k-1}{k}\}$ is
the partial sum of values of the $(k-1)$-th one:
\begin{equation}\label{6}
\sum_{j=1}^n\binom{j+k-2}{k-1}=\binom{n+k-1}{k}.
\end{equation}

\section{An equivalent form of identity (5)}
We calculate the interior sum in (\ref{5}) in a combinatorial way. First, let us
 consider also zero parts in the compositions of $k.$ In this case we have the
 sum
 \begin{equation}\label{7}
\Sigma_1=\sum_{k_1+k_2+...+k_r=k,\;k_i\geq0}\prod_{i=1}^{r}\binom{n}{k_i}.
 \end{equation}
 To calculate this sum, suppose that we have $rn$ white points and mark $k$ from
 them. This we can do in $\binom{rn}{k}$ ways. On the other hand, we can mark $k_1$
 from $n$ points (since the white points are indistinguishable, we can choose
 any $n$ points), $k_2$ from another $n$ points, etc. Thus we immediately obtain the
 equality
  \begin{equation}\label{8}
 \Sigma_1=\binom{rn}{k}.
 \end{equation}
 To calculate  the required interior sum in (\ref{5})
 \begin{equation}\label{9}
\Sigma_2=\sum_{k_1+k_2+...+k_r=k,\;k_i\geq1}\prod_{i=1}^{r}\binom{n}{k_i},
 \end{equation}
 we should remove zero parts in $\Sigma_1$ (\ref{7}), using "include-exclude"
  formula. Hence, we find
  \newpage
  $$ \Sigma_2=\binom{rn}{k}-\binom{r}{1}\binom{(r-1)n}{k}+$$
\begin{equation}\label{10}
 \binom{r}{2}\binom{(r-2)n}{k}-...+(-1)^{r-1}\binom{r}{r-1}\binom{n}{k}.
 \end{equation}
Now, by (\ref{9})-(\ref{10}),  we see that (\ref{5}) is equivalent to the identity

$$\sum_{r=1}^k(-1)^{k-r}\sum_{j=0}^{r-1}(-1)^j\binom{r}{j}\binom{n(r-j)}{k}=
 \binom{n+k-1}{k},$$
 or, putting $i=r-j,$ to the identity
 \begin{equation}\label{11}
 \sum_{r=1}^k\sum_{i=1}^{r}(-1)^i\binom{r}{i}\binom{ni}{k}=
 (-1)^k\binom{n+k-1}{k}.
 \end{equation}
 Changing here the order of summing, we have
$$\sum_{i=1}^k\sum_{r=i}^{k}(-1)^i\binom{r}{i}\binom{ni}{k}=$$
 \begin{equation}\label{12}
\sum_{i=1}^k(-1)^i\binom{ni}{k}\sum_{r=i}^{k}\binom{r}{i}=(-1)^k\binom{n+k-1}{k}.
 \end{equation}
 As is well known,
 $$\sum_{r=i}^{k}\binom{r}{i}=\binom{k+1}{i+1}.$$
 Therefore, (\ref{5}) is equivalent to the identity:
  \begin{equation}\label{13}
\sum_{i=1}^k(-1)^i\binom{ni}{k}\binom{k+1}{i+1}=(-1)^k\binom{n+k-1}{k}.
 \end{equation}
\section{(13) as a polynomial identity in $n$}
Unfortunately, we are not able to give a direct inductive proof of (\ref{13}).
Note that (\ref{13}) means the equality between two polynomials in $n$ of degree
$k.$ Therefore, for a justification of (\ref{13}), it is natural to use Stirling
numbers of the first kind with the generating polynomial for them (\cite{1}):
\begin{equation}\label{14}
x(x-1)\cdot...\cdot(x-n+1)=\sum_{j=1}^n s(n,j)x^j,\;n\geq1.
\end{equation}

Writing (\ref{13}) in the form
$$\sum_{i=1}^k(-1)^i in(in-1)\cdot...\cdot(in-k+1)\binom{k+1}{i+1}=$$
\newpage
\begin{equation}\label{15}
(-1)^k(n+k-1)(n+k-2)\cdot...\cdot n,
\end{equation}
by (\ref{14}), we have
$$\sum_{i=1}^k(-1)^i\binom{k+1}{i+1}\sum_{t=1}^k s(k,t)(in)^t=$$
\begin{equation}\label{16}
(-1)^k\sum_{t=1}^ks(k,t)(n+k-1)^t.
\end{equation}

In the left hand side of (\ref{16}), the coefficient of $n^t$ equals
$$s(k,t)\sum_{i=0}^{k}(-1)^i\binom{k+1}{i+1}i^t=$$
$$-s(k,t)\sum_{j=1}^{k+1}(-1)^j
\binom{k+1}{j}(j-1)^t= $$
$$-s(k,t)\sum_{j=0}^{k+1}(-1)^j\binom{k+1}{j}(j-1)^t+s(k,t)(-1)^t.$$
Since $t\leq k,$ then the $(k+1)$-th difference
$$\Delta^{k+1}[(j-1)^t]=\sum_{j=0}^{k+1}(-1)^j\binom{k+1}{j}(j-1)^t=0$$
and we conclude that for $t\geq1$
\begin{equation}\label{17}
Coef_{n^t}(\sum_{i=1}^k(-1)^i in(in-1)\cdot...\cdot(in-k+1)\binom{k+1}{i+1})=
(-1)^ts(k,t).
\end{equation}
In the right hand side of (\ref{16}), the coefficient of $n^t$ equals
$$(-1)^k\sum_{j=0}^ks(k,j)Coef_{n^t}(n+k-1)^j= $$
$$(-1)^k\sum_{j=t}^ks(k,j)\binom{j}{t}(k-1)^{j-t}.$$
Thus, comparing with (\ref{17}), we conclude that identity (\ref{13})
is equivalent to the identity
\begin{equation}\label{18}
\sum_{j=t}^{k}\binom{j}{t}s(k,j)(k-1)^{j-t}=(-1)^{k+t}s(k,t).
\end{equation}
Further we need two lemmas.
\newpage
\section{Lemmas}

\begin{lemma}\label{L4} For $1\leq t\leq k,$ we have
\begin{equation}\label{19}
\sum_{j=t+1}^{k}\binom{j}{t}s(k,j)=ks(k-1,t).
\end{equation}
\end{lemma}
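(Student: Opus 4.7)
Proof plan for Lemma \ref{L4}.

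The plan is to exploit the generating polynomial (\ref{14}) by shifting its argument. Write $P_k(x)=x(x-1)(x-2)\cdots(x-k+1)=\sum_{j=1}^{k}s(k,j)x^{j}$. The key algebraic observation is the factorisation
\begin{equation*}
P_k(x+1)=(x+1)x(x-1)\cdots(x-k+2)=(x+1)\,P_{k-1}(x).
\end{equation*}
This identity is what makes the sum $\sum \binom{j}{t}s(k,j)$ tractable, because binomial coefficients $\binom{j}{t}$ arise naturally as the coefficients of $x^t$ in $(x+1)^j$.

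Next I would compare the coefficient of $x^{t}$ on both sides. Expanding the left side as $\sum_{j}s(k,j)(x+1)^{j}=\sum_{j}s(k,j)\sum_{t}\binom{j}{t}x^{t}$, the coefficient of $x^{t}$ equals $\sum_{j\ge t}\binom{j}{t}s(k,j)$. On the right side, $(x+1)\sum_{j}s(k-1,j)x^{j}$ produces $s(k-1,t-1)+s(k-1,t)$ as the coefficient of $x^{t}$. Peeling off the $j=t$ term (with coefficient $\binom{t}{t}s(k,t)=s(k,t)$) on the left, we obtain
\begin{equation*}
\sum_{j=t+1}^{k}\binom{j}{t}s(k,j)=s(k-1,t-1)+s(k-1,t)-s(k,t).
\end{equation*}

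Finally, the standard recurrence for signed Stirling numbers of the first kind, $s(k,t)=s(k-1,t-1)-(k-1)s(k-1,t)$, which can be read off by comparing coefficients of $x^{t}$ in $P_k(x)=(x-k+1)P_{k-1}(x)$, yields $s(k-1,t-1)-s(k,t)=(k-1)s(k-1,t)$. Substituting this into the previous display gives $(k-1)s(k-1,t)+s(k-1,t)=k\,s(k-1,t)$, which is exactly (\ref{19}). There is no real obstacle here; the only thing to watch is that the recurrence step requires $t\ge 1$, matching the hypothesis $1\le t\le k$, and the boundary case $t=k$ where the sum on the left is empty is consistent with $s(k-1,k)=0$.
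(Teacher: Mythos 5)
Your proof is correct, and it takes a genuinely different and shorter route than the paper. The paper proves (\ref{19}) by induction over $k$: it writes the inductive hypothesis (\ref{21}) at both $t$ and $t-1$, adds the two instances using Pascal's rule $\binom{t+i-1}{t}+\binom{t+i-1}{t-1}=\binom{t+i}{t}$, and then invokes the recurrence (\ref{26}) in the form $s(k,t+i)=s(k-1,t+i-1)-(k-1)s(k-1,t+i)$ to assemble the statement at $k$ — about a page of index bookkeeping. You instead read the identity off in one stroke from the functional equation $P_k(x+1)=(x+1)P_{k-1}(x)$ of the falling factorial, extracting the coefficient of $x^t$ to get
\begin{equation*}
\sum_{j=t}^{k}\binom{j}{t}s(k,j)=s(k-1,t-1)+s(k-1,t),
\end{equation*}
and then finishing with the same recurrence (\ref{26}). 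This is non-inductive, exposes the cleaner intermediate identity displayed above, and makes the appearance of $\binom{j}{t}$ conceptually transparent (it is the coefficient of $x^t$ in $(x+1)^j$); what the paper's induction buys in exchange is only that it stays entirely inside the recurrence formalism. The boundary conventions you need ($s(n,0)=0$ for $n\geq 1$, $s(0,0)=1$, $s(k-1,k)=0$) are the standard ones and are consistent with the paper's usage, so there is no gap.
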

 \slshape Proof.\upshape \;We prove the lemma in the form:
\begin{equation}\label{20}
\sum_{i=1}^{k-t}\binom{t+i}{t}s(k,t+i)=ks(k-1,t),\; 1\leq t\leq k.
\end{equation}
We use induction over $k.$ Note that (\ref{20}) is
 valid for $k=1$ and $t\geq1.$ Suppose that
 \begin{equation}\label{21}
\sum_{i=1}^{k-1-t}\binom{t+i}{t}s(k-1,t+i)=(k-1)s(k-2,t),\; t\geq1,
\end{equation}
or, the same, changing the summing index $i:=i-1,$
\begin{equation}\label{22}
\sum_{i=2}^{k-t}\binom{t+i-1}{t}s(k-1,t+i-1)=(k-1)s(k-2,t),\; t\geq1,
\end{equation}
or
$$\sum_{i=1}^{k-t}\binom{t+i-1}{t}s(k-1,t+i-1)=$$
\begin{equation}\label{23}
(k-1)s(k-2,t)+s(k-1,t),\; t\geq1.
\end{equation}
For $t\geq2,$ put in (\ref{21}) $t:=t-1.$ Then, for $t\geq1,$ we have
 \begin{equation}\label{24}
\sum_{i=1}^{k-t}\binom{t+i-1}{t-1}s(k-1,t+i-1)=(k-1)s(k-2,t-1).
\end{equation}
This we sum with (\ref{23}). We find
$$\sum_{i=1}^{k-t}\binom{t+i}{t}s(k-1,t+i-1)=$$

\begin{equation}\label{25}
(k-1)s(k-2,t-1)+(k-1)s(k-2,t)+s(k-1,t)
,\; t\geq1.
\end{equation}
Recall that (\cite{1})
\begin{equation}\label{26}
s(n,t)=s(n-1,t-1)-(n-1)s(n-1,t).
\end{equation}
For $k\neq1,$ put here $n=k-1$ and multiply by $k-1.$ We have
$$(k-1)s(k-1,t)=$$ $$(k-1)s(k-2,t-1)-(k-1)(k-2)s(k-2,t)= $$
$$(k-1)s(k-2,t-1)-((k-1)^2-(k-1))s(k-2,t),$$
whence
\newpage
$$(k-1)^2s(k-2,t)=(k-1)s(k-2,t-1)-$$
\begin{equation}\label{27}
(k-1)s(k-1,t)+(k-1)s(k-2,t).
\end{equation}
Taking into account the inductive supposition (\ref{21}), from (\ref{27}) we find
$$(k-1)\sum_{i=1}^{k-1-t}\binom{t+i}{t}s(k-1,t+i)=$$
\begin{equation}\label{28}
(k-1)s(k-2,t-1)-(k-1)s(k-1,t)+(k-1)s(k-2,t).
\end{equation}
Note that, since $s(k-1,k)=0,$ then in (\ref{28}) we can consider the
summing up to $i=k-t.$ Subtracting (\ref{28}) from (\ref{25}), we have
$$\sum_{i=1}^{k-t}\binom{t+i}{t}(s(k-1,t+i-1)-(k-1)s(k-1, t+i))=ks(k-1,t).$$
Since
$$s(k-1,t+i-1)-(k-1)s(k-1, t+i)=s(k,t+i),$$

then we find
 $$\sum_{i=1}^{k-t}\binom{t+i}{t}s(k,t+i)=ks(k-1,t)$$
which, comparing with (\ref{21}), means the step of induction. \;\;\;\;$\square$
 \begin{lemma}\label{L5} We have
$$\sum_{i=1}^k(-1)^i(\binom{(n-1)i}{k}-\binom{ni}{k})\binom{k+1}{i+1}=$$
\begin{equation}\label{29}
\sum_{i=1}^{k-1}(-1)^i\binom{ni}{k-1}\binom{k}{i+1}.
\end{equation}
\end{lemma}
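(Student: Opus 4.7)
The plan is to view both sides of (\ref{29}) as polynomials in $n$ and match the coefficients of $n^{u}$ term by term, using the generating polynomial (\ref{14}) to expand the binomial coefficients in Stirling numbers of the first kind. One may take $k\geq 2$ throughout; for $k=1$ the identity is a trivial special case not needed later.

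Starting with the left-hand side, write $\binom{ni}{k}=\frac{1}{k!}\sum_{t=1}^{k}s(k,t)i^{t}n^{t}$, substitute $n\mapsto n-1$, expand $(n-1)^{t}$ by the binomial theorem, and subtract to obtain
$$\binom{(n-1)i}{k}-\binom{ni}{k}=\frac{1}{k!}\sum_{t=1}^{k}s(k,t)i^{t}\sum_{u=0}^{t-1}\binom{t}{u}(-1)^{t-u}n^{u}.$$
Multiplying by $(-1)^{i}\binom{k+1}{i+1}$, summing on $i$, and interchanging the orders of summation, the coefficient of $n^{u}$ in the left-hand side of (\ref{29}) becomes
$$\frac{1}{k!}\sum_{t=u+1}^{k}s(k,t)\binom{t}{u}(-1)^{t-u}\sum_{i=1}^{k}(-1)^{i}\binom{k+1}{i+1}i^{t}.$$
The inner $i$-sum equals $(-1)^{t}$: this is the computation already performed just before (\ref{17}) in Section~3, based on the vanishing of $\Delta^{k+1}[(j-1)^{t}]$ for $t\leq k$. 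Hence the coefficient of $n^{u}$ in the LHS reduces to $\frac{(-1)^{u}}{k!}\sum_{t=u+1}^{k}s(k,t)\binom{t}{u}$, and by Lemma~\ref{L4} this equals $\frac{(-1)^{u}s(k-1,u)}{(k-1)!}$ for $1\leq u\leq k-1$.

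For the right-hand side, the parallel expansion $\binom{ni}{k-1}=\frac{1}{(k-1)!}\sum_{u=1}^{k-1}s(k-1,u)i^{u}n^{u}$ makes the coefficient of $n^{u}$ equal to $\frac{s(k-1,u)}{(k-1)!}\sum_{i=1}^{k-1}(-1)^{i}\binom{k}{i+1}i^{u}$, and the inner sum, by the same finite-difference identity with $k$ replaced by $k-1$, equals $(-1)^{u}$. Both sides therefore have coefficient $\frac{(-1)^{u}s(k-1,u)}{(k-1)!}$ at $n^{u}$ for $1\leq u\leq k-1$. The constant terms agree as well: on the left $\frac{1}{k!}\sum_{t=1}^{k}s(k,t)=0$ by evaluating (\ref{14}) at $x=1$ (for $k\geq 2$), and on the right each summand carries $\binom{0}{k-1}=0$. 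The only substantive input is Lemma~\ref{L4}, which collapses the raw sum $\sum_{t>u}s(k,t)\binom{t}{u}$ into $k\,s(k-1,u)$; the remainder of the argument is routine sign-and-index bookkeeping between two parallel Stirling expansions.
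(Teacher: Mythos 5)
Your proof is correct and follows essentially the same route as the paper's: both expand the binomial coefficients in Stirling numbers of the first kind via (\ref{14}), evaluate the inner sums over $i$ through the vanishing of the $(k+1)$-th finite difference (the computation behind (\ref{17})), and reduce the resulting Stirling sum to Lemma~\ref{L4}. The only difference is organizational --- the paper first moves the $\binom{ni}{k}$ term to the other side and applies (\ref{17}) to it, while you keep the difference $\binom{(n-1)i}{k}-\binom{ni}{k}$ together so that the diagonal terms cancel automatically.
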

\slshape Proof.\upshape \; We prove (\ref{29}) in the form
 $$\sum_{i=1}^k(-1)^i\binom{(n-1)i}{k}\binom{k+1}{i+1}=$$
\begin{equation}\label{30}
\sum_{i=1}^k(-1)^i\binom{ni}{k}\binom{k+1}{i+1}+
\sum_{i=1}^{k-1}(-1)^i\binom{ni}{k-1}\binom{k}{i+1}.
\end{equation}
According to (\ref{17}) (which not depends on the validity of (\ref{13})),
the coefficient of $n^t$ of right hand side of (\ref{30})
 equals $\frac{(-1)^t}{k!}s(k,t)+\frac{(-1)^t}{(k-1)!}s(k-1,t).$ Thus, by (\ref{30}),
  we should prove that
 $$ Coef_{n^t}(\sum_{i=1}^k(-1)^i\binom{(n-1)i}{k}\binom{k+1}{i+1})=
 \frac{(-1)^t}{k!}(s(k,t)+ks(k-1,t),$$
 or
 \newpage
 $$ Coef_{n^t}(\sum_{i=1}^k(-1)^i\binom{k+1}{i+1})\sum_{r=0}^ks(k,r)((n-1)i)^r=$$ $$
 \sum_{i=1}^k(-1)^i\binom{k+1}{i+1})\sum_{r=t}^ks(k,r)i^r(-1)^{r-t}\binom{r}{t}=$$
 $$(-1)^t(s(k,t)+ks(k-1,t),$$
or, changing the order of summing, equivalently we should prove that
\begin{equation}\label{31}
\sum_{r=t}^k(-1)^r\binom{r}{t}s(k,r)\sum_{i=0}^k(-1)^i\binom{k+1}{i+1}i^r=
s(k,t)+ks(k-1,t)
\end{equation}
(we can sum over $i\geq0,$ since $r\geq t\geq1$).
Note that the interior sum of (\ref{31}) is
$$\sum_{i=0}^k(-1)^i\binom{k+1}{i+1}i^r=\sum_{j=1}^{k+1}(-1)^{j-1}
\binom{k+1}{j}(j-1)^r=$$

$$\sum_{j=0}^{k+1}(-1)^{j-1}\binom{k+1}{j}(j-1)^r+(-1)^r.$$
However, since $r\leq k,$ then
$$\Delta^{k+1}[(j-1)^r]=\sum_{j=0}^{k+1}(-1)^{j}\binom{k+1}{j}(j-1)^r=0$$
and thus
$$\sum_{i=0}^k(-1)^i\binom{k+1}{i+1}i^r=(-1)^r. $$
Now the left hand side of (\ref{31}) is $\sum_{r=t}^k\binom{r}{t}s(kr)$ and, by
Lemma \ref{L4}, is
$s(k,t)+ks(k-1,t).\;\;\;\;\;\square$

\section{Completion of proof of Theorem 3}
In Section 2 we proved that (\ref{5}) is equivalent to (\ref{13}).
Therefore, our aim is to prove (\ref{13}). We use induction over $k.$
 Note that (\ref{13}), evidently, satisfies in case $k=1$ and every $n.$
 Suppose that (\ref{13}) holds for $k:=k-1$
and every $n,$ i.e., \begin{equation}\label{32}
\sum_{i=1}^{k-1}(-1)^i\binom{ni}{k-1}\binom{k}{i+1}=(-1)^{k-1}\binom{n+k-2}{k-1}.
\end{equation}
By Lemma \ref{L5}, the inductive supposition (\ref{32}) is equivalent to the
identity
$$\sum_{i=1}^k(-1)^i(\binom{(n-1)i}{k}-\binom{ni}{k})\binom{k+1}{i+1}=$$
\begin{equation}\label{33}
(-1)^{k-1}\binom{n+k-2}{k-1}.
\end{equation}
\newpage
Putting $n:=j,$ and summing (\ref{33}) over $j$ from $j=1$ up to $j=n,$ according to
(\ref{6}), we find

$$\sum_{i=1}^{k}(-1)^i\binom{ni}{k}\binom{k+1}{i+1}=(-1)^{k}\binom{n+k-1}{k}$$
which is realized the step of induction.\;\;\;\;\; $\square$ \newline
Simultaneously, in view of the proved in Section 3 equivalence of
 (\ref{13}) and (\ref{18}), we proved the identity (\ref{18}).

\section{Remarks on the newness of identities (13), (18) and (19)}
  Formally, the identities (\ref{13}), (\ref{18}) and (\ref{19}) (and, consequently,
(\ref{5})) appear to be new, since they are absent in so fundamental sources
as \cite{1},\cite{4},\cite{7}. However, there is a deeper reason.
  The newness of (\ref{13}) (and together with it (\ref{18}) and (\ref{19})) is
  explained by the fact that there are no known
identities involving $\binom{in}{k}$ with the summing index $i.$ Indeed,
the only known generator of similar sums is Rothe-Hagen coefficient $A_k(x,n)$
 \cite{4}-\cite{5}. It is defined alternatively by the following formulas:
\begin{equation}\label{34}
A_k(x,n)=\frac{x}{x+kn}\binom{x+kn}{k},
\end{equation}
\begin{equation}\label{35}
A_k(x,n)=\sum_{i=0}^{k-1}(-1)^{i+k+1}\binom{k}{i}\binom{x+in}{k}\frac{x}{x+in}
,\; k\geq1.
\end{equation}
The comparison of these formulas leads to the identity of the form
$$\sum_{i=1}^{k-1}(-1)^{i+k+1}\binom{k}{i}\binom{x+in}{k}\frac{x}{x+in}=$$
\begin{equation}\label{36}
\frac{x}{x+kn}\binom{x+kn}{k}+(-1)^k\binom{x}{k}.
\end{equation}
Unfortunately, the attempt to eliminate from $x$ in $\binom{x+in}{k},$ putting $x=0,$
lead to the trivial identity $0=0.$ Consider another attempt. For $k>x\geq1,$ we have
$$\sum_{i=1}^{k-1}(-1)^{i+k+1}\binom{k}{i}\binom{x+in}{k}\frac{1}{x+in}=
\frac{1}{x+kn}\binom{x+kn}{k},$$
or
\begin{equation}\label{37}
\sum_{i=1}^{k}(-1)^{i-1}\binom{k}{i}\binom{x+in}{k}\frac{1}{x+in}=0,\; x\geq1.
\end{equation}
In the "\slshape singular \upshape " case $x=0,$ we obtain the required factor
of the form $\binom{ni}{k}$ and found (quite independently on (\ref{37})) a nice
identity
\newpage
\begin{equation}\label{38}
\sum_{i=1}^{k}\frac{(-1)^{i-1}}{i}\binom{in}{k}\binom{k}{i}=\frac{(-1)^{k-1}n}{k}
\end{equation}
which, most likely, is also new, but different from (\ref{13}). Indeed, denote
the left hand side of (\ref{38}) by $a_n(k).$ Using (\ref{14}),we have
$$a_n(k)=\frac{1}{n!}\sum_{i=1}^{n}\frac{(-1)^{i-1}}{i}\binom{n}{i}(ik)(ik-1)
\cdot...\cdot(ik-n+1)=$$
$$\frac{1}{n!}\sum_{i=1}^{n}\frac{(-1)^{i-1}}{i}\binom{n}{i}\sum_{t=0}^ns(n,t)
(ik)^t.$$
Thus, since $s(n,0)=0,$ then
\begin{equation}\label{39}
Coef_{k^t}(a_n(k))=\begin{cases} 0,\;\; if \; \;t=0,\\
\frac{s(n,\;t)}{n!}\sum_{i=1}^{n}(-1)^{i-1}\binom{n}{i}i^{t-1},\;\; if \;\;
t\geq1. \end{cases}
\end{equation}
Further, since
$$s(n,1)=(-1)^{n-1}(n-1)!,\;\sum_{i=1}^n(-1)^{i-1}\binom{n}{i}=1,$$
then
\begin{equation}\label{40}
Coef_{k}(a_n(k))=
\frac{(-1)^{n-1}}{n}.
\end{equation}
It is left to show that, for $t\geq2,$ we have
\begin{equation}\label{41}
s(n,\;t)\sum_{i=1}^{n}(-1)^{i-1}\binom{n}{i}i^{t-1}=0.
\end{equation}
Indeed, if $2\leq t\leq n,$ then we have
$$\sum_{i=1}^{n}(-1)^{i-1}\binom{n}{i}i^{t-1}=
(-1)^{n-1}\sum_{i=0}^{n}(-1)^{i}\binom{n}{i}(k-i)^{t-1}.$$
The latter is the $n$-th difference $\Delta^n[k^{t-1}]$ which, for $t\leq n,$
 equals 0. If $t>n,$ then $s(n,\;t)=0,$ and (\ref{41}) follows.\;\;\;\;\;\;\;\;\; $
 \square$
 \section{Gessel's short proof of (\ref{13})}
 Gessel \cite{3} proposed a short proof of the identity (\ref{13}).\newline
 \indent Let $P(x)$ be a polynomial of degree $k.$ Then, for the $(k+1)$-th difference of $P(x),$
we have
 $$\Delta^{k+1}[P(x)]=\sum_{j=0}^{k+1}(-1)^{k+1-j}\binom{k+1}{j}P(x+j)=0.$$
 In particular, for $x=0,$
 \newpage
 $$\sum_{j=0}^{k+1}(-1)^{j}\binom{k+1}{j}P(j)=0.$$
 Put here
 $$P(j)=P_{n,\;k}(j)=\binom{n(j-1)}{k}$$
 which is a polynomial in $j$ of degree $k.$ We have
  $$\sum_{j=0}^{k+1}(-1)^{j}\binom{k+1}{j}\binom{n(j-1)}{k}=0.$$
 Putting here $j-1=i,$ we find
  $$\sum_{i=-1}^k(-1)^{i}\binom{k+1}{i+1}\binom{ni}{k}=0, $$
  or, the same, for $k\geq1,$ we have
  $$\sum_{i=1}^k(-1)^{i}\binom{k+1}{i+1}\binom{ni}{k}=\binom{-n}{k}=$$
  \newline
  $$\frac{(-n)(-n-1)\cdot...\cdot(-n-(k-1))}{k!}=$$

   $$(-1)^k\frac{(n+k-1)(n+k-2)\cdot...\cdot n}{k!}=(-1)^k\binom{n+k-1}{k}.
  \;\;\;\;\square  $$
  \newline
  It is interesting to note that, if the author was successful to find such an elegant
  and simple proof, then, most likely, the identities (\ref{18}), (\ref{19})
  and (\ref{38}) were not discovered.
  \section{Dual case of identity (\ref{5})}
  Note that, together with Example\ref{e1}, we have the following identity
  $$\binom{n}{1}^3-2\binom{n}{1}\binom{n+1}{2}+\binom{n+2}{3}=\binom{n}{3}.$$
  In general, together with (\ref{5}), we prove the following dual identity.
  \begin{theorem}\label{t6}
\begin{equation}\label{42}
\sum_{r=1}^k(-1)^{k-r}\sum_{k_1+k_2+...+k_r=k,\;k_i\geq1}
\prod_{i=1}^{r}\binom{n+k_i-1}{k_i}=\binom{n}{k}.
\end{equation}
\end{theorem}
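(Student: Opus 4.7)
The plan is to derive (\ref{42}) directly from Theorem~\ref{t3} by the substitution $n\mapsto -n$ combined with the upper-negation identity
$$\binom{-n}{m}=(-1)^m\binom{n+m-1}{m}.$$
This works because both sides of (\ref{5}) are polynomials in $n$ of degree at most $k$, so that identity persists for every real value of the variable, in particular for $-n$.

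First I would perform the substitution $n\mapsto -n$ on the left-hand side of (\ref{5}) and apply the negation formula to each factor of every inner product. Since each composition satisfies $k_1+k_2+\cdots+k_r=k$, every product $\prod_{i=1}^r\binom{-n}{k_i}$ acquires a common sign $(-1)^{k_1+\cdots+k_r}=(-1)^k$, which is the same for all compositions of $k$ and therefore factors out of the entire double sum, leaving exactly the left-hand side of (\ref{42}) multiplied by $(-1)^k$.

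Next I would simplify the resulting right-hand side. Substituting $n\mapsto -n$ in $\binom{n+k-1}{k}$ yields
$$\binom{k-1-n}{k}=\frac{(k-1-n)(k-2-n)\cdots(-n)}{k!}=(-1)^k\,\frac{n(n-1)\cdots(n-k+1)}{k!}=(-1)^k\binom{n}{k},$$
where the middle equality comes from reversing the sign of each of the $k$ numerator factors. Cancelling the common $(-1)^k$ on both sides gives exactly (\ref{42}).

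I do not anticipate any real obstacle: the only step that demands a moment's care is the invocation of the polynomial-identity principle to legitimize the substitution $n\mapsto -n$, after which the derivation is entirely mechanical. In particular, no new analogue of the combinatorial, Stirling-number, or difference-operator machinery used in Sections~2--5 is required; the dual identity is a direct algebraic consequence of the original one.
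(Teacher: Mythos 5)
Your proof is correct, but it takes a genuinely different route from the paper. You deduce (\ref{42}) purely algebraically from Theorem \ref{t3}: both sides of (\ref{5}) are polynomials in $n$ of degree $k$, so the identity extends to all real $n$; substituting $n\mapsto -n$ and applying the upper-negation formula $\binom{-n}{m}=(-1)^m\binom{n+m-1}{m}$ to each factor produces a uniform sign $(-1)^{k_1+\cdots+k_r}=(-1)^k$ across all compositions, and the right-hand side $\binom{k-1-n}{k}=(-1)^k\binom{n}{k}$ matches after cancellation. All of these steps check out. The paper instead repeats the whole machinery of Section 2 in parallel: it evaluates the inner sum combinatorially via combinations with repetition ($\Sigma_3=\binom{rn+k-1}{k}$), applies inclusion--exclusion to remove zero parts, reduces (\ref{42}) to the dual single-sum identity (\ref{47}), and then disposes of that by the same finite-difference argument used for (\ref{13}) in Section 7. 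Your derivation is shorter and makes the ``duality'' between Theorems \ref{t3} and \ref{t6} literally the involution $n\mapsto -n$, at the cost of not producing the intermediate identity (\ref{47}) or the combinatorial interpretation of the inner sum; the paper's route yields both, and its symmetric-function treatment in Section 9 (Lemma \ref{L7}, the $e_k\leftrightarrow h_k$ exchange) is the structural counterpart of exactly the substitution you exploit.
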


\begin{proof}
Again we calculate the interior sum in (\ref{42}) in a combinatorial way with firstly
 consideration also zero parts in the compositions of $k.$ In this case we have the
 sum
 \newpage
 \begin{equation}\label{43}
\Sigma_3=\sum_{k_1+k_2+...+k_r=k,\;k_i\geq0}\prod_{i=1}^{r}\binom{n+k_i-1}{k_i}.
 \end{equation}
 To calculate this sum, suppose that we have $rn$ white points and mark $k$ from
 them, \emph{but now every point could be marked several times.} According to well
  known formula for the number of the combination with repetitions (cf. \cite{8},
   p.10), this we can do in $\binom{rn+k-1}{k}$ ways. On the other hand, we can
   mark (with repetitions) $k_1$ from $n$ points, $k_2$ from another $n$ points,
   etc. This leads us to the equality
  \begin{equation}\label{44}
 \Sigma_3=\binom{rn+k-1}{k}.
 \end{equation}
 To calculate  the required interior sum in (\ref{42})
 \begin{equation}\label{45}
\Sigma_4=\sum_{k_1+k_2+...+k_r=k,\;k_i\geq1}\prod_{i=1}^{r}\binom{n+k_i-1}{k_i},
 \end{equation}
 we should remove zero parts in $\Sigma_3$ (\ref{44}), using "include-exclude"
  formula. We find
  $$ \Sigma_4=\binom{rn+k-1}{k}-\binom{r}{1}\binom{(r-1)n+k-1}{k}+$$
\begin{equation}\label{46}
 \binom{r}{2}\binom{(r-2)n+k-1}{k}-...+(-1)^{r-1}\binom{r}{r-1}\binom{n+k-1}{k}.
 \end{equation}
Now in an analogous way, as in Section 2, we find that the identity (\ref{42})
 is equivalent to the identity dual to (\ref{13}):
\begin{equation}\label{47}
\sum_{i=1}^k(-1)^i\binom{ni+k-1}{k}\binom{k+1}{i+1}=(-1)^k\binom{n}{k}.
 \end{equation}
The latter identity is easily proved as (\ref{13}) in Section 7.
\end{proof}
\section{An algebraic approach}
L. Tevlin \cite{10} outlined the contours of quite another proof of Theorem \ref{t3}
in frameworks of the good advanced theory of symmetric functions.
Recall (cf.\cite{6}, \cite{8}) that, for each integer $k\geq0,$\newline
$i)$ the $k$-th elementary symmetric function $e_k$ is the sum of all products
 of $k$ distinct variables $x_i,$ so that $e_0=1$ and, for $k\geq1,$
\begin{equation}\label{48}
e_k=\sum_{i_1<i_2<...<i_k}x_{i_1}x_{i_2}...x_{i_k};
 \end{equation}
$ii)$ the $k$-th complete symmetric function $h_k$ is defined as $h_0=1$ and,
for $k\geq1,$
\begin{equation}\label{49}
h_k=\sum_{i_1\leq i_2\leq...\leq i_k}x_{i_1}x_{i_2}...x_{i_k}.
 \end{equation}
 \newpage
In particular, $h_1=e_1.$ It is convenient to define
 $ e_k=h_k=0$ for $k<0;$\newline
 $iii)$ it is well known that
 \begin{equation}\label{50}
h_k= \left |\begin{matrix}e_1&e_2&
e_3& \ldots &e_{k-2}&e_{k-1}& e_k\\ 1 & e_1 & e_2&\ldots
&e_{k-3}& e_{k-2}& e_{k-1} \\ 0 & 1 & e_1& \ldots &
e_{k-4}& e_{k-3} & e_{k-2}\\
0 & 0 & 1 & \ldots & e_{k-5} &e_{k-4} & e_{k-3}\\ \ldots & \ldots & \ldots & \ldots & \ldots & \ldots\\ 0 & 0 & 0 & \ldots & 1 &
e_1 & e_2\\ 0 & 0 & 0 & \ldots & 0 & 1 & e_1 \end{matrix}\right|
\end{equation}
and
\begin{equation}\label{51}
e_k= \left |\begin{matrix}h_1&h_2&
h_3& \ldots &h_{k-2}&h_{k-1}& h_k\\ 1 & h_1 & h_2&\ldots
&h_{k-3}& h_{k-2}& h_{k-1} \\ 0 & 1 & h_1& \ldots &
h_{k-4}& h_{k-3} & h_{k-2}\\
0 & 0 & 1 & \ldots & h_{k-5} &h_{k-4} & h_{k-3}\\ \ldots & \ldots & \ldots & \ldots & \ldots & \ldots\\ 0 & 0 & 0 & \ldots & 1 &
h_1 & h_2\\ 0 & 0 & 0 & \ldots & 0 & 1 & h_1 \end{matrix}\right|_.
\end{equation}
Diagonals of these determinants has very simple cycle structure that allows to give
an explicit formulas for them.
\begin{lemma}\label{L7} The following formulas hold
\begin{equation}\label{52}
h_k=\sum_{r=1}^k(-1)^{k-r}\sum_{k_1+k_2+...+k_r=k,\;k_i\geq1}
\prod_{i=1}^{r}e_{k_i};
\end{equation}
\begin{equation}\label{53}
e_k=\sum_{r=1}^k(-1)^{k-r}\sum_{k_1+k_2+...+k_r=k,\;k_i\geq1}
\prod_{i=1}^{r}h_{k_i}.
\end{equation}
\end{lemma}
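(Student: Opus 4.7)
The plan is to derive both identities by directly expanding the Hessenberg determinants in (\ref{50}) and (\ref{51}) via the Leibniz formula and identifying the nonvanishing terms with compositions of $k$; this realizes the ``very simple cycle structure'' alluded to in the paragraph preceding the lemma.

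For (\ref{52}), write the matrix in (\ref{50}) as $M=(m_{ij})_{1\le i,j\le k}$, so $m_{i,i-1}=1$ for $i\ge 2$, $m_{ij}=e_{j-i+1}$ for $j\ge i$, and $m_{ij}=0$ otherwise. The Leibniz formula gives $h_k=\sum_{\sigma\in S_k}\mathrm{sgn}(\sigma)\prod_{i=1}^{k}m_{i,\sigma(i)}$, and a term vanishes unless $\sigma(i)\ge i-1$ for every $i$.

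The combinatorial heart is the claim that a permutation $\sigma\in S_k$ satisfies $\sigma(i)\ge i-1$ for all $i$ if and only if its cycles partition $\{1,\ldots,k\}$ into consecutive intervals, with the cycle on a block $\{a+1,\ldots,a+m\}$ acting as $a+1\mapsto a+m\mapsto a+m-1\mapsto\cdots\mapsto a+2\mapsto a+1$. I would verify this by induction on $k$: if $c$ is the smallest element of a non-fixed cycle, then $\sigma^{-1}(c)\ge c$ (since it lies in the cycle of $c$), and the inequality $\sigma(\sigma^{-1}(c))=c\ge\sigma^{-1}(c)-1$ forces $\sigma^{-1}(c)=c+1$; iterating this preimage-chain shows that the cycle must descend through a consecutive interval $\{c,\ldots,c+m-1\}$ and close at $c$. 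The remaining elements $\{c+m,\ldots,k\}$ satisfy the same hypothesis, so the induction proceeds.

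This bijection sends a composition $(k_1,\ldots,k_r)$ of $k$ to a permutation consisting of $r$ consecutive cycles of lengths $k_1,\ldots,k_r$. Its contribution is transparent: inside the block of length $k_j$ starting at position $a+1$, the only above-diagonal entry used is $m_{a+1,\,a+k_j}=e_{k_j}$, while all other entries taken within the cycle are the subdiagonal $1$'s, so $\prod_i m_{i,\sigma(i)}=\prod_{j=1}^{r}e_{k_j}$. Since a cycle of length $k_j$ has sign $(-1)^{k_j-1}$, one gets $\mathrm{sgn}(\sigma)=\prod_{j=1}^{r}(-1)^{k_j-1}=(-1)^{k-r}$, matching the sign in (\ref{52}). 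Summing over compositions establishes (\ref{52}); the same argument applied to the determinant (\ref{51}), with the roles of $e$ and $h$ interchanged, yields (\ref{53}). The main obstacle is the bijective claim: one direction is immediate, but uniqueness of the cycle structure requires the preimage-tracing argument above, after which the sign and weight bookkeeping is routine.
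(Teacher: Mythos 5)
Your proof is correct and takes essentially the same approach as the paper: expanding the Hessenberg determinants (\ref{50})--(\ref{51}) over permutations (``diagonals'') and identifying the nonvanishing terms, via their cycle structure, with compositions of $k$, each cycle of length $k_i$ contributing $e_{k_i}$ (resp.\ $h_{k_i}$) and sign $(-1)^{k_i-1}$. The paper merely asserts the cycle structure is ``easy to see,'' while you supply the preimage-tracing argument that makes it rigorous.
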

\begin{proof} Consider products of nonzero elements of cycles of the Toeplitz matrix
(\ref{50}). It is easy to see that for every cycle of length 1 this product
 is $e_1,$ for every cycle of length 2 this product is $e_2,$ ..., for every cycle
 of length $i$ this product is $e_i.$ Therefore, a diagonal with cycles of length
 $k_1,k_2,...,k_r,$ such that $k_1+k_2+...+k_r=k,$ has the product of its elements
 $\prod_{i=1}^{r} e_{k_i}$ and in the determinant this product appears with sign
 $(-1)^{k-r}.$ Hence, (\ref{52}) follows. Dually we have also (\ref{53}).
 \end{proof}
 However, by Ex.1, p.26 in \cite{6}, it follows that, if $e_k=\binom{n}{k},$ then
 $h_k=\binom{n+k-1}{k}.$ Thus, in view of Lemma \ref{L7}, we obtain new proofs of identities
 (\ref{5}) and (\ref{42}).
 \section{Other identities generated by compositions of integers}
 In \cite{6} we find also other pairs $\{e_k, h_k\}$ given by explicit formulas.
 So we obtain other interesting identities generated by compositions of integers.
 We restrict ourself by the following five pairs of identities.\newline
 \newpage
 1) Pair $e_k=\frac{a(a-k)^{k-1}}{k!},\; h_k=\frac{a(a+k)^{k-1}}{k!},\;k\geq1,$
 leads to identities:
 \begin{equation}\label{54}
\sum_{r=1}^k(-1)^{k-r}\sum_{k_1+k_2+...+k_r=k,\;k_i\geq1}
\prod_{i=1}^{r}\frac{a(a-k_i)^{k_i-1}}{k_i!}=\frac{a(a+k)^{k-1}}{k!};
\end{equation}
\begin{equation}\label{55}
\sum_{r=1}^k(-1)^{k-r}\sum_{k_1+k_2+...+k_r=k,\;k_i\geq1}
\prod_{i=1}^{r}\frac{a(a+k_i)^{k_i-1}}{k_i!}=\frac{a(a-k)^{k-1}}{k!};
\end{equation}
2) Pair $e_k=\frac{(-1)^ka^kB_k}{k!},\; h_k=\frac{a^k}{(k+1)!},\;k\geq1,$ where $B_k$
is the $k$-th Bernoulli number, leads to identities:
\begin{equation}\label{56}
\sum_{r=1}^k(-1)^{k-r}\sum_{k_1+k_2+...+k_r=k,\;k_i\geq1}
\prod_{i=1}^{r}\frac{(-1)^{k_i}a^{k_i}B_{k_i}}{k_i!}=\frac{a^k}{(k+1)!};
\end{equation}
\begin{equation}\label{57}
\sum_{r=1}^k(-1)^{k-r}\sum_{k_1+k_2+...+k_r=k,\;k_i\geq1}
\prod_{i=1}^{r}\frac{a^{k_i}}{(k_i+1)!}=\frac{(-1)^ka^kB_k}{k!};
\end{equation}
3) Pair $e_k=q^{\frac{k(k-1)}{2}}\left[\begin{matrix} n\\k\end{matrix}\right],\;
 h_k=\left[\begin{matrix} n+k-1\\k\end{matrix}\right],$ where $\left[\begin{matrix} n\\k\end{matrix}\right]$ denotes the "$q$-binomial coefficient" or Gaussian polynomial
 $$\left[\begin{matrix} n\\k\end{matrix}\right]=\frac{(1-q^n)(1-q^{n-1})...
 (1-q^{n-k+1})}{(1-q)(1-q^2)...(1-q^k)}, $$
  leads to identities:

\begin{equation}\label{58}
\sum_{r=1}^k(-1)^{k-r}\sum_{k_1+k_2+...+k_r=k,\;k_i\geq1}
\prod_{i=1}^{r}q^{\frac{k_i(k_i-1)}{2}}\left[\begin{matrix} n\\k_i
\end{matrix}\right]=\left[\begin{matrix} n+k-1\\k\end{matrix}\right];
\end{equation}
\begin{equation}\label{59}
\sum_{r=1}^k(-1)^{k-r}\sum_{k_1+k_2+...+k_r=k,\;k_i\geq1}
\prod_{i=1}^{r}\left[\begin{matrix} n+k_i-1\\k_i\end{matrix}\right]=q^{\frac{k(k-1)}{2}}\left[\begin{matrix}
 n\\k\end{matrix}\right];
\end{equation}
4) Pair $e_k=q^{\frac{k(k-1)}{2}}/\varphi_k(q),\;
 h_k=1/\varphi_k(q),$ where
 $$\varphi_k(q)=(1-q)(1-q^2)...(1-q^k), $$
  leads to identities:

\begin{equation}\label{60}
\sum_{r=1}^k(-1)^{k-r}\sum_{k_1+k_2+...+k_r=k,\;k_i\geq1}
\prod_{i=1}^{r}(q^{\frac{k_i(k_i-1)}{2}}/\varphi_{k_i}(q))=1/\varphi_k(q);
\end{equation}
\begin{equation}\label{61}
\sum_{r=1}^k(-1)^{k-r}\sum_{k_1+k_2+...+k_r=k,\;k_i\geq1}
\prod_{i=1}^{r}1/\varphi_{k_i}(q)=q^{\frac{k(k-1)}{2}}/\varphi_k(q);
\end{equation}

5) Pair $e_k=\prod_{i=1}^k\frac{a-bq^{i-1}}{1-q^i},\; h_k=\prod_{i=1}^k
\frac{aq^{i-1}-b}{1-q^i},$
 leads to identities:
 \newpage
\begin{equation}\label{62}
\sum_{r=1}^k(-1)^{k-r}\sum_{k_1+k_2+...+k_r=k,\;k_i\geq1}
\prod_{i=1}^{r}\prod_{j=1}^{k_i}\frac{a-bq^{j-1}}{1-q^j}=\prod_{i=1}^k
\frac{aq^{i-1}-b}{1-q^i};
\end{equation}
\begin{equation}\label{63}
\sum_{r=1}^k(-1)^{k-r}\sum_{k_1+k_2+...+k_r=k,\;k_i\geq1}
\prod_{i=1}^{r}\prod_{j=1}^{k_i}
\frac{aq^{j-1}-b}{1-q^j}=\prod_{i=1}^k\frac{a-bq^{i-1}}{1-q^i}.
\end{equation}
\section{Acknowledgments}
The author thanks Ira M. Gessel for private communication \cite{3}. Especially he
is grateful to Lenny Tevlin for very useful discussions which lead to writing
 the last two sections of the paper.


\begin{thebibliography}{7}
\bibitem {1} M.\enskip Abramowitz \enskip and \enskip I.\enskip A.
\enskip Stegun (Eds.),\;\slshape Handbook of Mathematical Functions
with Formulas, Graphs, and Mathematical Tables, \enskip9th printing,\upshape \enskip New York: Dover, \enskip pp. 804-806, 1972.
\bibitem{2} G. \enskip E.\enskip Andrews,\; \slshape The Theory of Partitions,\upshape\enskip Cambridge University Press, 1998.
\bibitem{3} I.\enskip M.\enskip Gessel, \; Private communication.
\bibitem{4} H.\enskip W.\enskip Gould, \slshape Combinatorial identities,\upshape\;
Morgantown, 1972.
\bibitem{5} J.\enskip G.\enskip Hagen, Synopsis der Hoeheren Mathematik, V.1 (1891), 64-68.
\bibitem{6} I.\enskip G.\enskip Macdonald, \slshape Symmetric functions and Hall polynomials,\upshape\; Oxford Univ. Press, Second edition, 1995.
\bibitem{7} J.\enskip Riordan, \slshape Combinatorial Identities,\upshape\; Wiley,
New-York, 1968.
\bibitem {8} J. \enskip Riordan, \slshape An introduction to combinatorial
analysis,\upshape\enskip Wiley, Fourth printing, 1967.
\bibitem {9} D.\enskip Salamon, A survey of symmetric functions, Grassmannians, and
representations of the unitary group, \slshape Preprint,\upshape\enskip 1996.
\bibitem {10} L.\enskip Tevlin, \; Private communication.
\end{thebibliography}
\end{document}